\documentclass{article}%
\usepackage{amsfonts}
\usepackage{amsmath}
\usepackage{amssymb}
\usepackage{graphicx}%
\setcounter{MaxMatrixCols}{30}
\newtheorem{theorem}{Theorem}

\newtheorem{proposition}[theorem]{Proposition}

\newenvironment{proof}[1][Proof]{\noindent\textbf{#1.} }{\ \rule{0.5em}{0.5em}}
\begin{document}

\title{The Sums of a Double Hypergeometric Series and of the First $m+1$ Terms of
$_{3}F_{2}\left(  a,b,c;\left(  a+b+1\right)  /2,2c;1\right)  $ when $c=-m$ is
a Negative Integer}
\author{Charles F. Dunkl\thanks{E-mail: cfd5z@virginia.edu,
URL:http://people.virginia.edu/\symbol{126}cfd5z}\\Dept. of Mathematics, University of Virginia
\and George Gasper\thanks{E-mail: george@math.northwestern.edu,
URL:http://math.northwestern.edu/\symbol{126}george}\\Dept. of Mathematics, Northwestern University}
\maketitle

\begin{abstract}
A summation formula is derived for the sum of the first $m+1$ terms of the
$_{3}F_{2}\left(  a,b,c;\left(  a+b+1\right)  /2,2c;1\right)  $ series when
$c=-m$ is a negative integer. This summation formula is used to derive a
formula for the sum of a terminating double hypergeometric series that arose
in another project by one of us (C.D.)

\end{abstract}

\section{Introduction}

In the process of proving the terminating double hypergeometric summation
formula in Proposition \ref{dblsum} below, we needed to derive a summation
formula for a special case of the sum of the first $m+1$ terms of the
$_{3}F_{2}\left(  a,b,c;\frac{a+b+1}{2},2c;1\right)  $ series when $c=-m$ is a
negative integer. This $_{3}F_{2}$ series also appears in Watson's summation
formula%
\begin{equation}
_{3}F_{2}\left(
\genfrac{}{}{0pt}{}{a,b,c}{\frac{a+b+1}{2},2c}%
;1\right)  =\frac{\Gamma\left(  \frac{1}{2}\right)  \Gamma\left(  \frac{1}%
{2}+c\right)  \Gamma\left(  \frac{1}{2}+\frac{a}{2}+\frac{b}{2}\right)
\Gamma\left(  \frac{1}{2}-\frac{a}{2}-\frac{b}{2}+c\right)  }{\Gamma\left(
\frac{1}{2}+\frac{a}{2}\right)  \Gamma\left(  \frac{1}{2}+\frac{b}{2}\right)
\Gamma\left(  \frac{1}{2}-\frac{a}{2}+c\right)  \Gamma\left(  \frac{1}%
{2}-\frac{b}{2}+c\right)  },\label{WSF}%
\end{equation}
which was published by Watson in 1924 for $a$ being a negative integer, and in
1925 by Whipple for the more general case when $\operatorname{Re}\left(
2c+1-a-b\right)  >0$ for convergence and assuming $2c,\frac{a+b+1}{2}%
\notin\mathbb{Z}_{\leq0}$ (the numbers $0,-1,-2\ldots$) so that the
denominators in the terms of the series are never zero. See Bailey's book
\cite[Sec. 3.3]{GHS}.

\subsection{Remarks\label{Rem1}}

During the writing of the first version the authors found that, as the saying
goes, the journey is more educational than the destination. The starting point
was the need to sum a certain terminating truncated $_{3}F_{2}$ series; the
next logical step was to refer to standard sources, especially the
easy-to-access on-line Digital Library of Mathematical Functions \cite{DLMF},
\texttt{http://dlmf.nist.gov}. Formula (\ref{WSF}) is to be found there
\cite[16.4.6]{DLMF} but one has to avoid being too casual with applying
formulae for hypergeometric series when there are negative integers among the
denominator parameters, notably in the formulae for polynomials orthogonal
with respect to a finite discrete measure such as the Krawtchouk and Hahn
polynomials. As worked out below, formula (\ref{WSF}) does indeed contain
subtle pitfalls, besides which, one feels that a finite summation formula
should be provable without recourse to infinite series. So the next step on
the journey is to consult a knowledgeable colleague; in this case G.G. who
quickly found a proof using an 85-year-old result of Bailey's (which can
actually be found in an on-line archive - if one knows to look for it!). So
then version 1 of this note was written. But as perhaps should be expected for
something as widely used as $_{3}F_{2}$ sums, some strongly related results
(besides those of Bailey) had been previously obtained. Tom Koornwinder
pointed out that formula (\ref{sum3}) can also be deduced from Whipple's sum
\cite[16.4.7]{DLMF} by reversing the order of summation (set $k=m-j$) in the
left hand side. Some other technical comments are provided at the end of
Section 2. The authors deem it worthwhile to describe these diverse approaches
to summation problems as an instructive example of the solution process.

\subsection{Limits of Watson's Formula}

Let $k,m=0,1,2,\ldots$. We use the transformations $\Gamma\left(  a-m\right)
=\left(  -1\right)  ^{m}\dfrac{\Gamma\left(  a\right)  }{\left(  1-a\right)
_{m}}$ and $\Gamma\left(  \frac{1}{2}+t\right)  \Gamma\left(  \frac{1}%
{2}-t-m\right)  =\dfrac{\left(  -1\right)  ^{m}\pi}{\left(  \frac{1}%
{2}+t\right)  _{m}~\cos\pi t}$ (the Pochhammer symbol is defined by $\left(
t\right)  _{0}=1,\left(  t\right)  _{m+1}=\left(  t\right)  _{m}\left(
t+m\right)  $ for $t\in\mathbb{C}$), and%
\[
\lim_{c\rightarrow-m}\frac{\left(  c\right)  _{k}}{\left(  2c\right)  _{k}%
}=\frac{\left(  -m\right)  _{k}}{\left(  -2m\right)  _{k}},k=0,1,\ldots,2m,
\]
which equals zero for $k=m+1,\ldots,2m$. It follows from (\ref{WSF}) that
under the above convergence conditions%
\begin{gather}
\frac{\cos\frac{\pi a}{2}\cos\frac{\pi b}{2}}{\cos\frac{\pi\left(  a+b\right)
}{2}}\frac{\left(  \frac{a+1}{2}\right)  _{m}\left(  \frac{b+1}{2}\right)
_{m}}{\left(  \frac{1}{2}\right)  _{m}\left(  \frac{a+b+1}{2}\right)  _{m}%
}=\lim_{c\rightarrow-m}~_{3}F_{2}\left(
\genfrac{}{}{0pt}{}{a,b,c}{\frac{a+b+1}{2},2c}%
;1\right) \nonumber\\
=\sum_{k=0}^{m}\frac{\left(  a\right)  _{k}\left(  b\right)  _{k}\left(
-m\right)  _{k}}{k!\left(  \frac{a+b+1}{2}\right)  _{k}\left(  -2m\right)
_{k}}+\lim_{c\rightarrow-m}\sum_{k=2m+1}^{\infty}\frac{\left(  a\right)
_{k}\left(  b\right)  _{k}\left(  c\right)  _{k}}{k!\left(  \frac{a+b+1}%
{2}\right)  _{k}\left(  2c\right)  _{k}}. \label{sum2}%
\end{gather}
Thus we see that deriving a summation formula for the first sum in the right
hand side of (\ref{sum2}) from formula (\ref{WSF}) is equivalent to the
problem of evaluating the limit of the infinite series on the right side as
$c\rightarrow-m$. In fact the termwise limit of the series is a multiple of
$_{3}F_{2}\left(
\genfrac{}{}{0pt}{}{a+2m+1,b+2m+1,m+1}{a+b+2m+\frac{3}{2},2m+2}%
;1\right)  $, which can be summed by (\ref{WSF}), and after some
simplification the value of the limiting sum is $\frac{\sin\frac{\pi a}{2}%
\sin\frac{\pi b}{2}}{\cos\frac{\pi\left(  a+b\right)  }{2}}\frac{\left(
\frac{a+1}{2}\right)  _{m}\left(  \frac{b+1}{2}\right)  _{m}}{\left(  \frac
{1}{2}\right)  _{m}\left(  \frac{a+b+1}{2}\right)  _{m}}$; a careful argument
using the dominated convergence theorem is needed to justify these operations.
Of course for reasonably small $m$ the sum can be evaluated by computer
algebra systems like Maple$\texttrademark$ or Mathematica\texttrademark\ with
the result%
\begin{equation}
\sum_{k=0}^{m}\frac{\left(  a\right)  _{k}\left(  b\right)  _{k}\left(
-m\right)  _{k}}{k!\left(  \frac{a+b+1}{2}\right)  _{k}\left(  -2m\right)
_{k}}=\frac{\left(  \frac{a+1}{2}\right)  _{m}\left(  \frac{b+1}{2}\right)
_{m}}{\left(  \frac{1}{2}\right)  _{m}\left(  \frac{a+b+1}{2}\right)  _{m}},
\label{sum3}%
\end{equation}
but this is not a proof. In Section 2 we give our proof of (\ref{sum3}) for
all nonnegative integer values of $m$, without using infinite series (and
having to justify using the termwise limit of the infinite series on the right
side of (2) as $c\rightarrow-m$ to derive (3)). The summation formula for the
previously mentioned double hypergeometric sum is considered in Section 3.

\section{The Single Sum}

Explicitly we need to prove (ignoring the trivial case $m=0$ where the sum
equals one):

\begin{proposition}
\label{3F2mm}The summation formula (\ref{sum3}) holds for $m=1,2,3\ldots$ when
the parameters $a,b$ satisfy $\frac{a+b+1}{2}\neq0,-1,-2,\ldots,1-m$.
\end{proposition}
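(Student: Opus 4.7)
The plan is to adopt T.~H.~Koornwinder's suggestion recorded in Section \ref{Rem1} and reverse the order of summation in the left-hand side of (\ref{sum3}) via $k \mapsto m-j$. This will rewrite the truncated series as a constant times a new terminating $_3F_2$ that is summable by Whipple's formula \cite[16.4.7]{DLMF}.

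Concretely, I would first apply the reversal identities
\[
(\alpha)_{m-j}=\frac{(-1)^{j}(\alpha)_{m}}{(1-\alpha-m)_{j}},\qquad \frac{(-m)_{m-j}}{(m-j)!}=(-1)^{m-j}\binom{m}{j},\qquad (-2m)_{m-j}=\frac{(-1)^{m+j}\,(2m)!/m!}{(m+1)_{j}},
\]
to the Pochhammers in the parameters $a,b,\tfrac{a+b+1}{2}, -m,$ and $-2m$, and pull all $j$-independent factors outside the sum. After the signs collapse, the left-hand side of (\ref{sum3}) becomes
\[
\frac{(a)_{m}(b)_{m}\,(m!)^{2}}{(\tfrac{a+b+1}{2})_{m}\,(2m)!}\;{}_{3}F_{2}\!\left(\genfrac{}{}{0pt}{}{-m,\;m+1,\;\tfrac{1-a-b-2m}{2}}{1-a-m,\;1-b-m};1\right).
\]

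Next I would observe that the two numerator parameters $-m$ and $m+1$ sum to $1$, while the denominator parameters satisfy $(1-a-m)+(1-b-m)=1+2\cdot\tfrac{1-a-b-2m}{2}$. That is, the new series is exactly of the Whipple type $_{3}F_{2}(\alpha,1-\alpha,c;e,1+2c-e;1)$ to which \cite[16.4.7]{DLMF} applies, and that formula evaluates it as an explicit quotient of four $\Gamma$ functions.

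The last step, which I expect to be the main obstacle, is to combine this Gamma-function value with the prefactor above and verify that the product collapses to the clean Pochhammer ratio $(\tfrac{a+1}{2})_{m}(\tfrac{b+1}{2})_{m}/[(\tfrac{1}{2})_{m}(\tfrac{a+b+1}{2})_{m}]$ on the right-hand side of (\ref{sum3}). For this I would convert every Pochhammer into a ratio of Gammas, use the Legendre duplication formula $\Gamma(2x)=2^{2x-1}\Gamma(x)\Gamma(x+\tfrac12)/\sqrt{\pi}$ to split $(2m)!$ and $(\tfrac12)_{m}$ into half-argument factors matching those produced by Whipple, and invoke the reflection identity $\Gamma(x)\Gamma(1-x)=\pi/\sin\pi x$ wherever the reversal has generated parameters of the form $1-a-m$ or $1-b-m$. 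The hypothesis $\tfrac{a+b+1}{2}\notin\{0,-1,\ldots,1-m\}$ is exactly what keeps every denominator Pochhammer in the original LHS and every $\Gamma$ in the denominator of Whipple's output nonzero, so all of these manipulations are valid; convergence is never an issue because every series involved terminates.
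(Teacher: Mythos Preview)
Your approach is precisely the Koornwinder argument that the paper mentions in Section~\ref{Rem1} but does \emph{not} carry out; the authors' own proof instead starts from Bailey's terminating transformation~(\ref{sum4}), takes the limit $c\to -m$ term by term, and lands on a balanced $_3F_2$ that is summed directly by Pfaff--Saalsch\"utz. So the paper trades Whipple + reflection/duplication acrobatics for one classical transformation and one classical sum, with no Gamma functions ever appearing; your route is equally valid but passes through the $\Gamma$-calculus you describe. Both arguments stay entirely within terminating sums, which is the point of the Proposition.

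One slip to fix: after reversal the prefactor should be
\[
\frac{(a)_m(b)_m\,m!}{(\tfrac{a+b+1}{2})_m\,(2m)!},
\]
not $(m!)^2$ in the numerator (one factor of $m!$ is absorbed when you convert $1/(m-j)!$ into $(-1)^j(-m)_j/m!$). With that correction, applying \cite[16.4.7]{DLMF} with $\alpha=-m$, $c=\tfrac{1-a-b-2m}{2}$, $e=1-a-m$, and then using the duplication formula on $\Gamma(1-a)$ and $\Gamma(1-b)$ together with $(2m)!=2^{2m}m!(\tfrac12)_m$, the product collapses cleanly to the right-hand side of~(\ref{sum3}); no reflection formula is actually needed.
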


\begin{proof}
We start with the transformation formula%
\begin{equation}
_{3}F_{2}\left(
\genfrac{}{}{0pt}{}{-m,2a,2b}{a+b+\frac{1}{2},2c}%
;1\right)  =~_{4}F_{3}\left(
\genfrac{}{}{0pt}{}{a,b,2c+m,-m}{a+b+\frac{1}{2},c,c+\frac{1}{2}}%
;1\right)  ,\label{sum4}%
\end{equation}
(equation (4.31) in Bailey's 1929 paper \cite{B}) which holds for
$m=1,2,3\ldots$ and $a+b+\frac{1}{2},2c\notin\mathbb{Z}_{\leq0},$ and where
both sums are over the first $m+1$ terms of the series. Since both series in
(\ref{sum4}) terminate it is permissible to take term-by-term limits as
$c\rightarrow-m$ giving:
\begin{align}
\sum_{k=0}^{m}\frac{\left(  -m\right)  _{k}\left(  2a\right)  _{k}\left(
2b\right)  _{k}}{k!\left(  a+b+\frac{1}{2}\right)  _{k}\left(  -2m\right)
_{k}} &  =\sum_{k=0}^{m}\frac{\left(  a\right)  _{k}\left(  b\right)
_{k}\left(  -m\right)  _{k}\left(  -m\right)  _{k}}{k!\left(  a+b+\frac{1}%
{2}\right)  _{k}\left(  -m\right)  _{k}\left(  -m+\frac{1}{2}\right)  _{k}%
}\label{sumPF}\\
&  =~_{3}F_{2}\left(
\genfrac{}{}{0pt}{}{a,b,-m}{a+b+\frac{1}{2},-m+\frac{1}{2}}%
;1\right)  \nonumber\\
&  =\frac{\left(  a+\frac{1}{2}\right)  _{m}\left(  b+\frac{1}{2}\right)
_{m}}{\left(  a+b+\frac{1}{2}\right)  _{m}\left(  \frac{1}{2}\right)  _{m}%
}\nonumber
\end{align}
by the Pfaff-Saalsch\"{u}tz summation formula (see \cite[2.2(1)]{GHS}).
Replacing $a,b$ in (\ref{sumPF}) by $\frac{a}{2},\frac{b}{2}$ respectively
completes the proof.
\end{proof}

The series in (\ref{sum3}) can also be written in the Bailey notation
\cite[Sec. 10.4]{GHS} as $_{3}F_{2}\left(  a,b,-m;\frac{a+b+1}{2}%
,-2m;1\right)  $ to $m+1$ terms, or as a truncated\linebreak$_{3}F_{2}\left(
a,b,-m;\frac{a+b+1}{2},-2m;1\right)  _{m}$ series, where the subscript $m$
denotes the sum of the first $m+1$ terms of the $_{3}F_{2}$ series. Before
writing the first version of this note, we had searched for the summation
formula (\ref{sum3}) in many papers and books on special functions, including
well-known books by G.E. Andrews and R. Askey and R. Roy \cite{AAR}, R.P.
Agarwal, W.N. Bailey, A. Erd\'{e}lyi (\emph{Higher Transcendental Functions}),
G. Gasper and M. Rahman, I.S. Gradshteyn and I.M. Ryzhik, M.E.H. Ismail, Y.L.
Luke, L.J. Slater, G. Szeg\"{o}, and G.N. Watson (see, e.g., the References in
\cite{AAR} and \cite{BHS}) without finding it. 

We extended the search to basic and elliptic hypergeometric series to see if
(\ref{sum3}) could be obtained as a limit case of any published summation
formulas for such series. Eventually, this led us to the observation that from
formula (3.17) in the Jain paper \cite{J1981}
\begin{equation}
_{4}\phi_{3}\left(
\genfrac{}{}{0pt}{}{a^{2},b^{2},-q^{-N},q^{-N}}{ab\sqrt{q},-ab\sqrt{q}%
,q^{-2N}}%
;q;q\right)  _{N}=\frac{\left(  a^{2}q;q^{2}\right)  _{N}\left(  b^{2}%
q;q^{2}\right)  _{N}}{\left(  a^{2}b^{2}q;q^{2}\right)  _{N}\left(
q;q^{2}\right)  _{N}},\label{sum6}%
\end{equation}
where we inserted a missing subscript $N$ on the right side of the series to
indicate that it is a truncated $_{4}\phi_{3}$ series, it follows by replacing
$a,b$ in it by $q^{a},q^{b}$ respectively, and letting $q\rightarrow1$ that
this formula is a $q$-analogue of (\ref{sum3}). Analogous to Koornwinder's
observation mentioned in Sec. \ref{Rem1}, formula (\ref{sum6}) can be deduced
from Andrews' $q$-analogue of Whipple's sum in \cite{An} by reversing the
order of summation on the left hand side (see the formula for reversing the
summation order in terminating $q$-series in \cite[Ex. 1.4(ii)]{BHS}). For a
nonterminating $q$-analogue of (1), see Ex. 2.17 in \cite{BHS}.

\section{The Double Sum}

Another project of one of us (C.D.) deals with evaluating separability
probabilities for $4\times4$ so-called X-density matrices, a continuation of
investigations by C. D. and P. Slater, \cite{SD1},\cite{SD2}. These matrices
form a 7-dimensional subset of the space $M_{4}\left(  \mathbb{C}\right)  $
and the probability calculation involves a five-fold iterated integral, which
leads to the definite integral of a double sum containing $\int_{0}^{1}%
x^{i+j}dx.$ This can be phrased as%
\begin{equation}
S\left(  m,n\right)  :=\sum_{i=0}^{m}\frac{\left(  -m\right)  _{i}\left(
n+1\right)  _{i}}{i!\left(  m+n+2\right)  _{i}}\sum_{j=0}^{n}\frac{\left(
-n\right)  _{j}\left(  \frac{1}{2}-n\right)  _{j}}{j!\left(  \frac{1}%
{2}\right)  _{j}}\frac{1}{i+j+\frac{1}{2}}. \label{defS}%
\end{equation}
This series is of hypergeometric type because $\dfrac{1}{i+j+\frac{1}{2}%
}=2\dfrac{\left(  \frac{1}{2}\right)  _{i+j}}{\left(  \frac{3}{2}\right)
_{i+j}}$. For $m=0$ the formula is easily evaluated with the Chu-Vandermonde
$_{2}F_{1}$ sum (see \cite[Sec. 1.3]{GHS}), and for $n=0$ the sum is a special
case of a terminating well-poised $_{3}F_{2}$ series formula. Using symbolic
computation to evaluate $S\left(  m,n\right)  $ for $m=0,1,2,3$ suggested that
a closed form does exist; specifically the formula%
\[
\dfrac{S\left(  m,n\right)  }{S\left(  m-1,n+1\right)  }=\dfrac{2m\left(
n+1\right)  }{\left(  2n+1\right)  \left(  n+2m+1\right)  }%
\]
was verified for a few small values of $m,n$. After further exploration (which
led to trying to fill in the gap between $i+j+\frac{1}{2}$ and $j-\frac{1}{2}%
$, the last factor in $\left(  \frac{1}{2}\right)  _{j}$), a proof was found
that needed a special case of formula (\ref{sum3}).

\begin{proposition}
\label{dblsum}For $m,n=0,1,2,3,\ldots$%
\[
S\left(  m,n\right)  =2^{2m+2n}\frac{m!\left(  m+n\right)  !\left(
m+n+1\right)  !\left(  \frac{1}{2}\right)  _{n}}{n!\left(  n+2m+1\right)
!\left(  \frac{1}{2}\right)  _{m+n+1}}.
\]

\end{proposition}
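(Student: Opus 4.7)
The plan is to convert $\frac{1}{i+j+\frac{1}{2}}=2\int_{0}^{1}u^{2i+2j}\,du$ so as to decouple the indices $i,j$, and then to write $S(m,n)$ as the integral of a product of two terminating ${}_2F_1$ series. After interchanging summation and integration,
\begin{equation*}
S(m,n)=2\int_{0}^{1}{}_2F_1(-m,n+1;m+n+2;u^{2})\,{}_2F_1\!\left(-n,\tfrac{1}{2}-n;\tfrac{1}{2};u^{2}\right)du.
\end{equation*}
The elementary identity $\frac{(-n)_{j}(\frac{1}{2}-n)_{j}}{j!(\frac{1}{2})_{j}}=\binom{2n}{2j}$ combined with the binomial theorem identifies the second factor with $\tfrac{1}{2}[(1+u)^{2n}+(1-u)^{2n}]$. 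Since the first factor is even in $u$, the contribution of $(1-u)^{2n}$ can be folded into an extension of the range via $u\mapsto -u$, giving the single integral
\begin{equation*}
S(m,n)=\int_{-1}^{1}{}_2F_1(-m,n+1;m+n+2;u^{2})\,(1+u)^{2n}\,du.
\end{equation*}

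Next I would apply the Euler transformation for terminating ${}_2F_1$,
\begin{equation*}
{}_2F_1(-m,b;c;z)=\frac{(c-b)_{m}}{(c)_{m}}\,{}_2F_1(-m,b;b-c-m+1;1-z),
\end{equation*}
with $b=n+1$ and $c=m+n+2$. This replaces the first factor by a constant multiple of ${}_2F_1(-m,n+1;-2m;1-u^{2})$; swapping the denominator $m+n+2$ for $-2m$ is the decisive move that will later make Proposition~\ref{3F2mm} applicable. The substitution $u=2t-1$ then gives $1-u^{2}=4t(1-t)$, $(1+u)^{2n}=4^{n}t^{2n}$, and $du=2\,dt$, so the integral becomes $\frac{2\cdot 4^{n}(2m)!(m+n+1)!}{m!(2m+n+1)!}\int_{0}^{1}t^{2n}\,{}_2F_1(-m,n+1;-2m;4t(1-t))\,dt$.

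Expanding this ${}_2F_1$ termwise and evaluating by the Beta integral $\int_{0}^{1}t^{2n+k}(1-t)^{k}\,dt=(2n+k)!\,k!/(2n+2k+1)!$, then using the duplication identity $(2n+2)_{2k}=4^{k}(n+1)_{k}(n+\tfrac{3}{2})_{k}$ to cancel $(n+1)_{k}$ and the powers of $4$, the sum over $k$ collapses cleanly to
\begin{equation*}
\frac{1}{2n+1}\sum_{k=0}^{m}\frac{(2n+1)_{k}\,(1)_{k}\,(-m)_{k}}{k!\,(n+\tfrac{3}{2})_{k}\,(-2m)_{k}}.
\end{equation*}
This is precisely the ${}_3F_2$ of Proposition~\ref{3F2mm} with $a=2n+1$ and $b=1$, for which $(a+b+1)/2=n+\tfrac{3}{2}$ matches the displayed lower parameter; formula (\ref{sum3}) then evaluates it to $(n+1)_{m}\,m!/[(\tfrac{1}{2})_{m}(n+\tfrac{3}{2})_{m}]$.

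Combining the Euler prefactor, the Jacobian $2\cdot 4^{n}$, and the evaluated sum yields $S(m,n)=\frac{2\cdot 4^{n}(2m)!(m+n+1)!(n+1)_{m}}{(2n+1)(2m+n+1)!(\tfrac{1}{2})_{m}(n+\tfrac{3}{2})_{m}}$. The main remaining obstacle is purely bookkeeping: using $(n+1)_{m}=(m+n)!/n!$, $(\tfrac{1}{2})_{m}=(2m)!/(4^{m}m!)$, and the key factorization $(\tfrac{1}{2})_{m+n+1}=\tfrac{2n+1}{2}(\tfrac{1}{2})_{n}(n+\tfrac{3}{2})_{m}$, one reconciles the above with the claimed $2^{2m+2n}\frac{m!(m+n)!(m+n+1)!(\tfrac{1}{2})_{n}}{n!(n+2m+1)!(\tfrac{1}{2})_{m+n+1}}$. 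The conceptual heart of the proof is that the Euler transformation $z\mapsto 1-z$ is exactly what converts a ${}_2F_1$ whose denominator $m+n+2$ is "wrong" for Proposition~\ref{3F2mm} into one whose Beta-integral pairing against $t^{2n}$ produces the $(-2m)$-denominator of the target ${}_3F_2$.
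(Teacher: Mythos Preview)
Your argument is correct. You reach the very same terminating ${}_3F_2$ with parameters $a=2n+1$, $b=1$ (equivalently $a=n+\tfrac12$, $b=\tfrac12$ in the paper's form (\ref{sumPF})) that the paper does, and then invoke Proposition~\ref{3F2mm} in exactly the same way; the final bookkeeping also matches. The route to that ${}_3F_2$, however, is genuinely different. The paper stays purely within finite series manipulations: it applies the involutive binomial transform $M_{ij}=\binom{i}{j}(-1)^{j}$ to the inner $j$-sum, evaluates the transformed sums by two applications of Chu--Vandermonde, inverts, and then collapses the outer $i$-sum by one more Chu--Vandermonde. Your approach instead realizes $\frac{1}{i+j+\frac12}$ as $2\int_0^1 u^{2i+2j}\,du$, closes the $j$-factor to $\tfrac12[(1+u)^{2n}+(1-u)^{2n}]$, and then --- this is the nice idea --- uses the terminating Euler transformation $z\mapsto 1-z$ to swap the lower parameter $m+n+2$ for $-2m$, after which the substitution $u=2t-1$ and a Beta integral produce the target ${}_3F_2$ directly. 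Your method is more analytic and makes transparent why the $-2m$ denominator appears, while the paper's method has the virtue of never leaving the realm of finite sums; both are of comparable length and both funnel into the same key identity (\ref{sum3}).
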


\begin{proof}
Let%
\[
A_{ni}:=\sum_{j=0}^{n}\frac{\left(  -n\right)  _{j}\left(  \frac{1}%
{2}-n\right)  _{j}}{j!\left(  \frac{1}{2}\right)  _{j}\left(  i+j+\frac{1}%
{2}\right)  },
\]
and
\[
\sum_{i=0}^{k}\binom{k}{i}\frac{\left(  -1\right)  ^{i}}{i+j+\frac{1}{2}}%
=\sum_{i=0}^{k}\frac{\left(  -k\right)  _{i}\left(  j+\frac{1}{2}\right)
_{i}}{i!\left(  j+\frac{1}{2}\right)  _{i+1}}=\frac{1}{j+\frac{1}{2}}%
\frac{\left(  1\right)  _{k}}{\left(  j+\frac{3}{2}\right)  _{k}}=\frac
{k!}{\left(  j+\frac{1}{2}\right)  _{k+1}},
\]
by the Chu-Vandermonde sum. Then%
\begin{gather*}
\sum_{i=0}^{k}\binom{k}{i}\left(  -1\right)  ^{i}A_{ni}=\sum_{j=0}^{n}%
\frac{\left(  -n\right)  _{j}\left(  \frac{1}{2}-n\right)  _{j}}{j!\left(
\frac{1}{2}\right)  _{j}}\frac{k!}{\left(  j+\frac{1}{2}\right)  _{k+1}}\\
=\frac{k!}{\left(  \frac{1}{2}\right)  _{k+1}}\sum_{j=0}^{n}\frac{\left(
-n\right)  _{j}\left(  \frac{1}{2}-n\right)  _{j}}{j!\left(  \frac{1}%
{2}+k+1\right)  _{j}}=\frac{k!}{\left(  \frac{1}{2}\right)  _{k+1}}%
\frac{\left(  k+n+1\right)  _{n}}{\left(  \frac{1}{2}+k+1\right)  _{n}}\\
=\frac{k!\left(  k+n+1\right)  _{n}}{\left(  \frac{1}{2}\right)  _{k+n+1}%
}=:B_{nk}.
\end{gather*}
The matrix $M$ with $M_{ij}=\binom{i}{j}\left(  -1\right)  ^{j}$ for
$i,j\geq0$ (and $M_{ij}=0$ for $i<j$) is its own inverse, thus (note
$B_{nk}=\sum_{i=0}^{k}M_{ki}A_{ni}$)%
\[
A_{ni}=\sum_{k=0}^{i}\binom{i}{k}\left(  -1\right)  ^{k}B_{nk}.
\]
The case $S\left(  0,n\right)  =A_{n,0}=B_{n,0}$ is trivial, so we assume
$m\geq1$ in the following. The expression for $A_{nk}$ is now used to find
that :
\begin{align*}
S\left(  m,n\right)   &  =\sum_{i=0}^{m}\frac{\left(  -m\right)  _{i}\left(
n+1\right)  _{i}}{i!\left(  m+n+2\right)  _{i}}A_{ni}=\sum_{i=0}^{m}%
\frac{\left(  -m\right)  _{i}\left(  n+1\right)  _{i}}{i!\left(  m+n+2\right)
_{i}}\sum_{k=0}^{i}\left(  -1\right)  ^{k}\frac{i!}{k!\left(  i-k\right)
!}B_{nk}\\
&  =\sum_{k=0}^{m}\left(  -1\right)  ^{k}\frac{B_{nk}\left(  -m\right)
_{k}\left(  n+1\right)  _{k}}{k!\left(  m+n+2\right)  _{k}}\sum_{j=0}%
^{m-k}\frac{\left(  k-m\right)  _{j}\left(  n+1+k\right)  _{j}}{j!\left(
m+n+2+k\right)  _{j}}\\
&  =\sum_{k=0}^{m}\left(  -1\right)  ^{k}\frac{B_{nk}\left(  -m\right)
_{k}\left(  n+1\right)  _{k}}{k!\left(  m+n+2\right)  _{k}}\frac{\left(
m+1\right)  _{m-k}}{\left(  m+n+2+k\right)  _{m-k}}\\
&  =\frac{\left(  m+1\right)  _{m}}{\left(  m+n+2\right)  _{m}}\sum_{k=0}%
^{m}\frac{k!\left(  k+n+1\right)  _{n}\left(  -m\right)  _{k}\left(
n+1\right)  _{k}}{k!\left(  \frac{1}{2}\right)  _{k+n+1}\left(  -2m\right)
_{k}}\\
&  =\frac{\left(  m+1\right)  _{m}\left(  n+1\right)  _{n}}{\left(
m+n+2\right)  _{m}\left(  \frac{1}{2}\right)  _{n+1}}\sum_{k=0}^{m}%
\frac{\left(  2n+1\right)  _{k}\left(  -m\right)  _{k}\left(  1\right)  _{k}%
}{k!\left(  n+\frac{3}{2}\right)  _{k}\left(  -2m\right)  _{k}};
\end{align*}
this used the change of summation index $i=k+j$, so that $0\leq k\leq m$ and
$0\leq j\leq m-k$, and the identity
\[
\left(  m+1\right)  _{m-k}=\dfrac{\left(  m+1\right)  _{m-k}\left(
2m+1-k\right)  _{k}}{\left(  2m+1-k\right)  _{k}}=\dfrac{\left(  m+1\right)
_{m}\left(  -1\right)  ^{k}}{\left(  -2m\right)  _{k}}.
\]
Finally we set $a=n+\frac{1}{2},b=\frac{1}{2}$ in (\ref{sumPF}) to obtain%
\begin{gather*}
\sum_{k=0}^{m}\frac{\left(  2n+1\right)  _{k}\left(  -m\right)  _{k}\left(
1\right)  _{k}}{k!\left(  n+\frac{3}{2}\right)  _{k}\left(  -2m\right)  _{k}%
}=\frac{m!\left(  n+1\right)  _{m}}{\left(  \frac{1}{2}\right)  _{m}\left(
n+\frac{3}{2}\right)  _{m}},\\
S\left(  m,n\right)  =\frac{\left(  m+1\right)  _{m}\left(  n+1\right)  _{n}%
}{\left(  m+n+2\right)  _{m}\left(  \frac{1}{2}\right)  _{n+1}}\frac{m!\left(
n+1\right)  _{m}}{\left(  \frac{1}{2}\right)  _{m}\left(  n+\frac{3}%
{2}\right)  _{m}}\\
=\frac{\left(  2m\right)  !\left(  n+1\right)  _{n}\left(  n+1\right)  _{m}%
}{\left(  m+n+2\right)  _{m}\left(  \frac{1}{2}\right)  _{m+n+1}\left(
\frac{1}{2}\right)  _{m}}=2^{2m+2n}\frac{m!\left(  n+1\right)  _{m}}{\left(
n+m+2\right)  _{m}\left(  n+\frac{1}{2}\right)  _{m+1}},
\end{gather*}
using $\left(  2m\right)  !=2^{2m}m!\left(  \frac{1}{2}\right)  _{m}$ and
$n!\left(  n+1\right)  _{n}=\left(  2n\right)  !=2^{2n}n!\left(  \frac{1}%
{2}\right)  _{n}.$ The last expression is equivalent to the stated formula in
the Proposition (typical step: $\left(  n+1\right)  _{m}=\frac{\left(
n+m\right)  !}{n!}$).
\end{proof}

\end{document}